
\documentclass[12pt]{article}
\usepackage[margin=1in]{geometry}
\usepackage{plain}
\usepackage{multirow}
\usepackage{graphicx}

\begin{document}
{
    \catcode`\@=11
    \gdef\curl{\mathop{\operator@font curl}\nolimits}
    \gdef\div{\mathop{\operator@font div}\nolimits}
    \gdef\Tr{\mathop{\operator@font Tr}\nolimits}
}
\newtheorem{theorem}{Theorem}
\newtheorem{definition}{Definition}
\newtheorem{lemma}{Lemma}
\newtheorem{result}{Result}
\newtheorem{proposition}{Proposition}
\newenvironment{proof}[1][Proof]{\noindent{\it #1.} }{\ \rule
{0.5em}{0.5em}\medskip\par}
\def\K{{\cal K}}
\def\Z{{\bf Z}}
\def\N{{\bf N}}
\def\C{{\bf C}}
\def\R{{\bf R}}
\def\E{{\bf E}}
\def\A{{\cal A}}
\def\U{{\cal U}}
\def\F{{\cal F}}
\def\J{{\cal J}}
\def\vb{\big(\!\big(}
\def\ve{\big)\!\big)}

\title{Model error in the LANS-alpha and
NS-alpha deconvolution models of turbulence}
\date{June 20, 2017}
\author{Eric Olson\thanks{
    Department of Mathematics and Statistics,
	University of Nevada,
	Reno, NV 89557, USA. {\it email:}{\tt\ ejolson@unr.edu}}}
\maketitle

\begin{abstract}
This paper reports on a computational study of the 
model error in the LANS-alpha and
NS-alpha deconvolution models of homogeneous 
isotro\-pic turbulence.
Computations are also performed for a new turbulence model 
obtained as a rescaled limit of the deconvolution model.
The technique used is to plug a 
solution obtained from direct numerical simulation of
the incompressible Navier--Stokes equations 
into the competing turbulence models and to then 
compute the time evolution of the resulting residual.
All computations have been done in two dimensions 
rather than three for convenience and efficiency.
When the effective averaging length scale 
in any of the models
is $\alpha_0=0.01$ the
time evolution of the root-mean-squared residual error grows 
as $\sqrt t$.
This growth rate similar to what would happen 
if the model error were given by a stochastic force.
When $\alpha_0=0.20$ the residual error grows linearly.
Linear growth suggests that the model error possesses
a systematic bias.
Finally, for $\alpha_0=0.04$
the residual error in LANS-alpha model exhibited linear growth;
however, for this value of~$\alpha_0$ the higher-order 
alpha models that were tested did not.
\end{abstract}

\section{Introduction}

Consider two dynamical systems
$$
	{du\over dt} = \F(u)
\qquad\hbox{and}\qquad
	{dv\over dt} = \widetilde \F(v)
$$
on a Hilbert space $V$ with norm $\|\cdot\|$.
Suppose the evolution of $u$ is given by 
exact dynamics
and the evolution of~$v$ according to some approximate 
dynamics.
Define the model error of the approximate dynamics as
the residual $R$ obtained by plugging the exact 
solution $u$ into the equation governing $v$.
Thus, 
\begin{equation}\label{resid}
	dR
	=du-\widetilde \F(u)dt
	=\big(\F(u)-\widetilde \F(u)\big)dt
\end{equation}
where by convention we take $R(0)=0$.
Specifically, consider the case where $\F$ is given by the 
two-dimensional incompressible Navier--Stokes equations 
and 
$\widetilde\F$ represents a particular alpha turbulence model.
The focus of this paper is whether, to what extent, and under what 
conditions do the residuals $R$ obtained through numeric 
computation behave qualitatively as spatially-correlated and 
temporally-white Gaussian processes.

This question is motivated, in part, by the analysis 
of Hoang, Law and Stuart~\cite{hoang2014} for the 
4DVAR data assimilation algorithm.
That analysis assumes $R=W$ where $W$ is a spatially-correlated 
and temporally-white Gaussian process and proceeds to show that
the inverse problem of finding the initial condition $u_0$ and 
the posterior distribution of $W$ is a continuous function 
of noisy observations of the velocity field.
In light of this result, we are interested whether the assumption 
$R=W$ is realistic when the residual error is given by actual 
turbulence models.
We are also motivated by the simple desire
to compare different turbulence models.
Stolz, Adams and Kleiser \cite{stolz2001} state that taking the
order for the NS-alpha deconvolution models to be $d=3$ already gives 
acceptable results while choosing the order larger than $5$ does not 
improve the results significantly.
We test this claim by examining the growth rate of $R$ for
different values of $d$ and comparing the 
results to a new rescaled limit of the deconvolution model which 
has an exponentially small consistency error.

The LANS-alpha model of turbulence is given by the equations
\begin{plain}\begin{equation}\label{lansalpha}\eqalign{
{\partial v\over \partial t}
	+ (\bar v\cdot\nabla)v+v_j \nabla \bar v_j=\nu\Delta v-\nabla p+f,\cr
\noalign{\medskip}
\nabla\cdot \bar v = 0
\qquad\hbox{where}\qquad
	v=(1-\alpha^2 \Delta) \bar v.\cr
}\end{equation}\end{plain}%
Here $v$ is the Eulerian velocity field, $\bar v$ is the
average velocity field, $\alpha$ is the averaging length scale,
$\nu$ is the kinematic viscosity, $p$ is the physical pressure
and $f$ is a body force.
Note that setting $\alpha=0$ yields the standard Navier--Stokes 
equations.
These equations,
originally called the viscous 
Camassa--Holm equations, were introduced
as a closure for the Reynolds averaged Navier--Stokes
equations by Chen, Foias, Holm, Olson, Titi and Wynne in 1998
through a series of papers~\cite{chen1998,chen1999a,chen1999b}.
At the same time, numerical simulations by Chen, Holm,
Margolin and Zhang~\cite{chen1999c} concluded that 
the LANS-alpha model also functions as an effective 
subgrid-scale model.  
Connections to the theory of global attractors and 
homogeneous isotropic turbulence and global attractors 
appear in \cite{foias2002}.  

Note that equations (\ref{lansalpha}) can be derived as
the Euler-Poincar\'e equations of an averaged Lagrangian
to which a viscous term, obtained by identifying the momentum
in the physical derivation, has been added.
This derivation further assumes that the turbulence is 
homogeneous and isotropic.
A body of theoretical and numerical literature on
the LANS-$\alpha$ model exists---see
\cite{cao2009,cheskidov2004,deugoue2012,hecht2008,hoang2014,
ilyin2003,lopes2015,marsden2003,mininni2005}
and references therein---that, among other things,
explores the dependency on $\alpha$ and the limit when 
$\alpha\to 0$,
relaxes the homogeneity and isotropy assumptions,
studies boundary conditions and boundary layers,
and treats other physical systems.
In summary, the LANS-alpha model
is a well-studied turbulence model that is suitable
for further study here.

To avoid a study of boundary layers we consider flows in 
domains with periodic boundary conditions.
To approximate homogeneous and isotropic turbulence we
choose a time-independent body forcing that has no regular 
patterns in space and for which the resulting flow 
undergoes complex time dependent behavior that in no way 
resembles the force, see Figure~\ref{forcefig}.
Since the body-force is time independent, it is natural
to suppose the statistics of the flow are stationary.  
While these assumptions are consistent with
the classical theories of fully developed turbulence
developed by Kolmogorov~\cite{kolmogorov1941} and
Kraichnan~\cite{kraichnan1967},
the possibility of intermittency
may lead to non-equilibrium and non-stationarity.
Moreover, while domains with periodic boundary conditions are obviously 
homogeneous, the presence of any non-zero forcing function has 
the potential to render the statistics of the resulting flow 
inhomogeneous.
As noted by Kurien, Aivalis and Sreenivasan~\cite{kurian2001}, 
see also Taylor, Kurien and Eyink~\cite{taylor2003},
even when the body forcing is zero,
turbulent flows in periodic domains can possess a certain 
degree of anisotropy.
It is hoped, therefore, that the stationarity, homogeneity and 
isotropy assumptions made in the derivation of the LANS-alpha and 
NS-alpha deconvolution models are well enough satisfied that 
the turbulence models studied here apply.
Viewed in a different way, our computations of the residual error 
may be seen as a test of these assumptions.

The NS-alpha deconvolution model of turbulence is
structurally the same as the LANS-alpha model, except
that the derivation allows for the more general 
filtering relationship between $v$ and $\bar v$ given by
\begin{equation}\label{NSalphafilt}
	\bar v=D_d (I-\alpha^2\Delta)^{-1} v
\end{equation}
where $D_d$ is the $d$-th order van Cittert approximate 
deconvolution operator
$$\displaystyle D_d=\sum_{n=0}^d \big(1-(1-\alpha^2\Delta)^{-1}\big)^n.$$
Note that setting $d=0$ yields the LANS-alpha model
and setting $\alpha=0$ again yields the incompressible
Navier--Stokes equations.
This model was introduced by Rebholz \cite{rebholz2007} 
as a helicity correction to higher-order Leray-alpha models.
It may also be seen as the $\alpha=\beta$ case of the 
alpha-beta models which have been the subject of recent
numerical work by Kim, Neda, Rebholz and 
Fried \cite{kim2011} and others.
For simplicity we don't consider the alpha-beta generalization here,
but instead focus solely on how the order $d$ of the
deconvolution operator affects the growth of the residual error.

In this paper we also study the limit turbulence model
obtained by identifying the effective averaging length scale 
$\alpha_0=\alpha/\sqrt{d+1}$ in the NS-alpha deconvolution model
and then taking $d\to\infty$ while holding $\alpha_0$ constant.
This results in a new turbulence model with an exponential 
smoothing filter given by
$$
	\bar v= \Big\{1-\exp\Big({\Delta^{-1}\over \alpha_0^2}\Big)\Big\} v
$$
with the same structure as LANS-alpha and NS-alpha deconvolution
models.  For convenience of terminology in the remainder of this 
paper, we will refer to this limit turbulence model as 
the {\it exponential-alpha model}.

Since the dynamics of the turbulence models 
considered here are 
deterministic, the model error represented by the residual $R$
is also deterministic.
To understand to what extent our computations support the
assumptions in \cite{hoang2014},
we now recall what happens when the model error is actually equal 
to a stochastic 
force and further what happens when it contains a systematic bias.
Suppose $\widetilde \F(u)dt=\F(u)dt-dW$
where $W$ is a $V$-valued $Q$-Brownian motion.
Here $V$ is an infinite dimensional Hilbert space and $Q$ is
a trace-class symmetric linear operator on $V$.
Let $\Xi$ be the underlying probability space.
For each $\xi\in\Xi$ we obtain a
residual realized by the sample path $R(t)=W(t;\xi)$.
Thus,
$$
	\E\big[ \|R(t)\|^2\big] 
		= \Tr \big[{\rm Cov}(R(t))\big]=t \Tr Q.
$$
If the model error also contains a systematic bias, then
\begin{equation}\label{H1}
R(t)=tF_b+W(t;\xi)
\end{equation}
where $F_b\in V$ is the bias.
In this case,
\begin{equation}\label{fiterr}
	\E\big[\|R(t)\|^2\big]
		=t^2 \|F_b\|^2+t \Tr Q.
\end{equation}
Note that the root-mean-squared residual error
${\cal E}_{\rm rms}(t)= \E\big[ \|R(t)\|^2\big]^{1/2}$
grows as $\sqrt t$ in the stochastic case and linearly
when there is systematic bias.

Returning now to the deterministic case,
let $\F$ represent
the dynamics of the two-dimensional incompressible
Navier--Stokes equations and $\widetilde\F$ be
the two-dimensional version of one of the alpha 
turbulence models described above.
As we are studying fully developed turbulence that arises from 
long-term evolution, it is reasonable to suppose $u_0$ 
lies on the global attractor $\A$ determined by the exact dynamics.
We further assume, for computational convenience, that all our 
solutions are $2\pi$-periodic with mean zero.

Thus, for each $u_0\in\A$ we obtain a solution $u(t,x)$
to the incompressible two-dimensional Navier--Stokes equations
of the form
$$
	u(t,x)=\sum_{k\in\Z^2\setminus\{0\}} u_k(t) e^{ik\cdot x}
\qquad\hbox{with}\qquad u_k(t)\in \C^2
$$
such that $u_k=\overline{u_{-k}}$ and 
	$k\cdot u_k=0$.
Foias and Temam show in \cite{foias1989}, 
that such solutions are analytic in time with values in 
a Gevrey class of functions.
This implies, see also \cite{foias2015,temam1995} and references therein,
that there exist 
constants $M_\alpha$ depending only on $\nu$ and $f$ such that
$$
	\|u\|_\alpha^2=\vb u,u\ve_\alpha
		\le M_\alpha
\qquad\hbox{where}\qquad
\vb u,v\ve_\alpha = 4\pi^2 \sum_{k\in\J} |k|^{2\alpha} u_k \overline{v_k}
$$
for all $t\in\R$ and $u_0\in\A$.
For notational convenience write $\|u\|=\|u\|_1$ and
$\vb u,v\ve=\vb u,v\ve_1$.
We take $V=\{\, u : \|u\|<\infty\,\}$ note that $R(t;u_0)\in V$
and also that the divergence-free condition implies this
norm is equivalent to the $H^1$ Sobolev norm.

The residual $R$ depends on the solution $u$ to the
exact dynamics, which in turn, depends on the initial condition $u_0$.
Since $u_0$ is unknown in the case of data assimilation, 
we may interpret the parameter $\xi$ in (\ref{H1}) as depending 
on an unknown $u_0\in\A$ distributed 
according some probability measure $\mu$.  
This leads to a natural definition of the root-mean-square 
residual error using ensemble averages as
\begin{equation}\label{ensemble}
	{\cal E}_{\rm rms}(t)=
	\big\langle \|R(t)\|^2\big\rangle^{1/2}
		= \Big\{\int_{\A} \|R(t;u_0)\|^2 d\mu(u_0)\Big\}^{1/2}.
\end{equation}
We are now able to state one of our main results:
{\it
If $\alpha_0$ is sufficiently small, then 
numerical computations show ${\cal E}_{\rm rms}(t)$ grows as 
$\sqrt t$; however, if $\alpha_0$ is too large, 
then ${\cal E}_{\rm rms}(t)$ grows linearly in time.
Moreover, even before taking ensemble averages our
computations show for each of the hundred different $u_0\in\A$ 
tested that $\|R(t;u_0)\|$ grows as $\sqrt t$ when $\alpha_0$ is
sufficiently small and linearly when $\alpha_0$ is large.}

Of course the computational fact that ${\cal E}_{\rm rms}(t)$ grows 
as $\sqrt t$ for small values of $\alpha_0$ does not imply the model 
errors in the corresponding alpha models are actually given by
Brownian motions.
Brownian motions have independent increments and are 
almost-surely nowhere differentiable.  
On the other hand the deterministic residuals studied here 
are differentiable and, as will briefly be shown, do not 
have independent increments.

Given $\delta>0$ define $Z_j =Z(\tau_j,\delta)$ where
\begin{equation}\label{taudef}
	\tau_j=j\delta
\qquad\hbox{and}\qquad Z(t,\delta)=R(t;u_0)-R(t-\delta;u_0).
\end{equation}
If the residual errors had independent increments 
then we would have
$
	\langle Z_j\otimes Z_{j+1} \rangle = 0
$
for any value of $\delta$.  The following 
argument shows this is not the case
provided $\delta$ is small enough.
Consider the function 
$r(t)=\vb R(t;u_0),\phi\ve$ where
$\phi\in V$.
In light of the analyticity of $u(t)$, the function
$r(t)$ is continuously differentiable with 
time derivatives that are uniformly bounded on the attractor.
Thus, for $l=0,1,2,\ldots$ we have
$$B_l=\sup\{\, |r^{(l)}(t)| : u_0\in\A,\ \|\phi\|=1\hbox{ and }t\in\R\,\}<\infty.
$$
Although $r'(t)$ may vanish for some choices of parameters, 
it is reasonable 
to suppose that the ensemble average of $r'(t)^2$ is positive.

In particular, we assume there exists 
	$\epsilon>0$ and $T>0$ such that
\begin{plain}\begin{equation}\label{muass}
	\sup\big\{\,\big\langle r'(t)^2\big\rangle^{1/2}: \|\phi\|=1\,\big\}
	\ge \epsilon
	\qquad\hbox{for all}\qquad t\in[0,T].
\end{equation}\end{plain}%
While (\ref{muass}) is satisfied  with $T=100\,000$
in each of our computations as seen in Table \ref{mineps}, 
a rigorous justification is outside 
the scope of the present work.
Intuitively, such a justification would depend on the support of 
$\mu$ including points which are not fixed points.  
If the support consists of points whose trajectories are 
chaotic, then plausibly (\ref{muass}) could hold for all $t\in\R$.
While we do not need this stronger condition, we note 
for the choice of parameters considered in our computations
that the long term evolution depicted in Figure \ref{forcefig} 
does indeed suggest that the attractor consists of chaotic trajectories.

By Taylor's theorem we have
\begin{plain}$$\eqalign{
	r(\tau_{j-1})&=r(\tau_j)-\delta r'(\tau_j)
		+{\textstyle{1\over 2}}\delta^2r''(c_1)
	\cr
	r(\tau_{j+1})&=r(\tau_j)+\delta r'(\tau_j)
		+{\textstyle{1\over 2}}\delta^2r''(c_2)
}$$\end{plain}%
where $\tau_{j-1}<c_1<\tau_j<c_2<\tau_{j+1}$.  It follows from
the mean value theorem that
\begin{plain}$$\eqalign{
	(Z_j\otimes Z_{j+1})(\phi,\phi)
		&=
		(r(\tau_j)-r(\tau_{j-1}))
		(r(\tau_{j+1})-r(\tau_{j}))\cr
		&=\delta^2 \big\{
			r'(\tau_j)^2+{\textstyle{1\over 2}} \delta r'(\tau_j)
				\big(r''(c_2)-r''(c_1)\big) 
			- {\textstyle{1\over 2}}\delta^2 r''(c_1)r''(c_2)\big\}\cr
		&=\delta^2 \big\{
			r'(\tau_j)^2+{\textstyle{1\over 2}} \delta(c_2-c_1)
				r'(\tau_j) r'''(c_3)
			- {\textstyle{1\over 2}}\delta^2 r''(c_1)r''(c_2)\big\}\cr
		&\ge \delta^2 \big\{
			r'(\tau_j)^2
			-\delta^2 M^2\big\}\cr
}$$\end{plain}%
where $M^2=B_1B_3+{\textstyle{1\over2}B_2^2}$.
Now, choosing $\delta\le \min\{ T, 2^{-1/2}\epsilon/M\}$ yields
\begin{plain}\begin{equation}\label{iscor}\eqalign{
	\sup\{\,\langle (Z_1\otimes Z_{2})(\phi,\phi)\rangle : \|\phi\|=1\,\}
	&\ge \delta^2\big\{\epsilon^2 
		- \delta^2 M^2\big\}
	\ge {\textstyle {1\over 2}}\delta^2\epsilon^2
	>0.
}\end{equation}\end{plain}%
Therefore, the increments are positively correlated provided $\delta$
is small enough.

Although the residual error in a deterministic turbulence model
does not consist of independent increments,
for the choice of flow-parameters considered here
the fact that 
solutions to the Navier--Stokes equations
have a 
sensitive dependence on initial conditions and seem to forget 
their initial conditions exponentially over time leads to 
the possibility that the 
increments may appear independent when $\delta$ is large.
In general, chaotic systems can produce time-series that 
are indistinguishable from Gaussian random noise when analyzed with 
any type of linear analysis---power spectrum, autocorrelation 
or probability distribution functions---see, for example, 
Sprott \cite{sprott2001}.
Therefore, instead of performing standard statistical tests 
often used to check for randomness in economic data such as the 
Ljung--Box $Q$-statistic \cite{box2015} or the variance-ratio 
test of Lo and MacKinlay \cite{lo1988},
we instead content ourselves with some simple descriptive 
statistics.

This paper is organized as follows:  Section~2 explores the
relation between $\alpha$ and $d$ in the NS-alpha deconvolution
model to show that the effective averaging length scale $\alpha_0$
depends on $d$ as $\alpha_0=\alpha/\sqrt{d+1}$.
We then derive the exponential alpha model by
taking the limit $d\to\infty$ while holding $\alpha_0$ fixed.
Section~3 describes the numerical methods used
to compute the solution $u$ of the two-dimensional incompressible
Navier--Stokes equations that will be plugged
into the turbulence models to compute the residual error.
In Lemma~\ref{aball} we show for small enough time steps 
that the resulting discrete dynamical system posses a 
global attractor.
Section~4 presents our computational results including
our main result on the growth rate of ${\cal E}_{\rm rms}(t)$.
Section~5 further describes the statistical properties 
of residual error.
The paper ends by summarizing our conclusions and stating
some plans for future work.

\section{The Effective Averaging Length Scale}

In this section we identify the effective averaging length scale 
in the NS-alpha deconvolution model 
as $\alpha_0=\alpha/\sqrt{d+1}$
and then use this 
identification to derive the exponential-alpha model as the
limit $d\to\infty$ holding $\alpha_0$ constant.
Any deconvolution model based 
on the smoothing filter $(1-\alpha^2\Delta)^{-1}$ will
have the same scaling between $\alpha$ and $\alpha_0$
and exponential-alpha model in the limit.
In particular, the effective averaging length scale $\alpha_0$
identified here applies equality well to 
three-dimensional fluid flows.

We consider the effects of the smoothing 
filter (\ref{NSalphafilt}) 
on regular $2\pi$-periodic functions with zero spatial
average in Fourier space.
Similar results could be obtained in more general settings, 
however, since our computations will be made for
$2\pi$-periodic domains, it is easiest to work in that 
setting from the beginning.
Henceforth, write the functions $v$ and $\bar v$ 
in terms of Fourier series as
$$
	v(x,t)=\sum_{k\in\Z^2\setminus\{0\}} v_k(t) e^{ik\cdot x}
\qquad\hbox{and}\qquad
	\bar v(x,t)=\sum_{k\in\Z^2\setminus\{0\}} \bar v_k(t) e^{ik\cdot x}.
$$
It follows the smoothing filter (\ref{NSalphafilt}) in 
the NS-alpha deconvolution model may be written as
$$
	\bar v_k = D_{d,k}(1-\alpha^2|k|^2)^{-1} v_k
$$
where
$$
	D_{d,k}
	= \sum_{n=0}^d \Big(1-{1\over 1+\alpha^2|k|^2}\Big)^n
	= \sum_{n=0}^d 
\Big({\alpha^2|k|^2\over 1+\alpha^2|k|^2}\Big)^n.
$$
Summing the above geometric series yields
$$
	{D_{d,k}\over 1+\alpha^2|k|^2} = 
	1-\Big({\alpha^2|k|^2\over 1+\alpha^2|k|^2}\Big)^{d+1}.
$$
Observe that
\begin{plain}$$
	{D_{d,k}\over 1+\alpha^2 |k|^2} \to 1 \qquad
	\matrix{
	\hbox{as}& d\to\infty
	&\hbox{holding $\alpha$ constant,}\cr
	\hbox{or as}& \alpha\to 0
	&\hbox{holding $d$ constant.}\cr}
$$\end{plain}%
At the same time, note that
$$
	{D_{d,k}\over 1+\alpha^2 |k|^2} \to 0
	\qquad\hbox{ as }\qquad \alpha\to\infty
	\quad\hbox{holding $d$ constant.}
$$
The above limits suggest there may be a tradeoff between
$\alpha$ and $d$ which could be used to identify an
effective averaging length scale $\alpha_0$ that depends on $d$.

\begin{figure}[h!]
	\centerline{\begin{minipage}[b]{0.75\textwidth}
    \caption{\label{dnkfig}%
	The smoothing filter in the NS-alpha
    deconvolution model in Fourier space for four different
    choices of $\alpha$ and $d$.}
	\end{minipage}}
	\centerline{\includegraphics[height=0.45\textwidth]{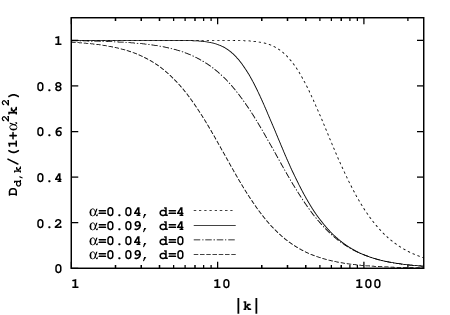}}
\end{figure}

Evidence of such a tradeoff is presented 
in Figure~\ref{dnkfig} where
four representative curves of the smoothing filter
$D_{d,k}/(1+\alpha^2|k|^2)$ are
plotted for $\alpha$ and $d$ where 
$\alpha\in\{0.04,0.09\}$ and $d\in \{ 0,4\}$.
Smaller values of $\alpha$ lead to a smoothing filter which is 
closer to 1 while larger values of $d$ also 
yield a filter which is closer to 1.
Moreover, the high-frequency attenuation is nearly the same 
when $\alpha=0.09$ and $d=4$ 
as it is when $\alpha=0.04$ and $d=0$.
This suggests a clear tradeoff between $d$ and $\alpha$
for which the microscales are the same.

We now perform an asymptotic analysis of $D_{d,k}/(1+\alpha^2|k|^2)$ 
as $k\to\infty$ to precisely identify 
the effective averaging length scale $\alpha_0$ that 
leaves the high-frequency attenuation of the smoothing filter 
unchanged as $d$ is varied.
Since
$$
	{D_{d,k}\over 1+\alpha^2|k|^2} = 
	1-\Big({\alpha^2|k|^2\over 1+\alpha^2|k|^2}\Big)^{d+1}
	\sim {d+1\over \alpha^2 |k|^2}
	\quad\hbox{as}\quad k\to\infty,
$$
one can rescale the filter by setting $\alpha=\alpha_0\sqrt{d+1}$
where $\alpha_0$ is constant to obtain an asymptotic decay 
that is independent of $d$ when $k\to\infty$.
The fact that $0.09\approx 0.04 \sqrt{4+1}$ now explains
why the high-frequency attenuation was nearly the same
for two of the curves in Figure~\ref{dnkfig}.

The above identification motivates the definition 
of the rescaled smoothing filter 
$H_{d,k}$ in terms of the effective averaging length scale 
$\alpha_0$ as
\begin{plain}$$\eqalign{
	H_{d,k}
&={D_{d,k}\over 1+(d+1)\alpha_0^2 |k|^2}
	=
		1-\Big(
			{(d+1)\alpha_0^2|k|^2\over
			1+(d+1)\alpha_0^2|k|^2}\Big)^{d+1}.
}$$\end{plain}%
The limit $d\to \infty$ holding $\alpha_0$ constant now 
leads to the non-trivial limit filter
\begin{plain}$$
	H_{\infty,k}=
	1-\lim_{d\to\infty}
		\Big(
			{(d+1)\alpha_0^2|k|^2\over
			1+(d+1)\alpha_0^2|k|^2}\Big)^{d+1}
	= 1 - \exp\Big({-1\over \alpha_0^2 |k|^2}\Big).
$$\end{plain}%
Moreover, for every $d\in {\bf N}\cup \{\infty\}$ we have that
$$
	H_{d,k} \sim {1\over \alpha_a^2 |k|^2}
	\qquad\hbox{as}\qquad k\to\infty
$$
and therefore the high-frequency
attenuation of this family of filters is independent of $d$.
Figure \ref{dnksfig} demonstrates how changing $d$ 
affects $H_{d,k}$ in the low modes without 
affecting the microscales.

\begin{figure}[h!]
	\centerline{\begin{minipage}[b]{0.75\textwidth}
    \caption{\label{dnksfig}%
	When $\alpha_0$ is fixed
	the cutoff for
	the high modes is independent of the order $d$.
	The vertical line denotes $|k|=1/\alpha_0$.
}
	\end{minipage}}
	\centerline{\includegraphics[height=0.45\textwidth]{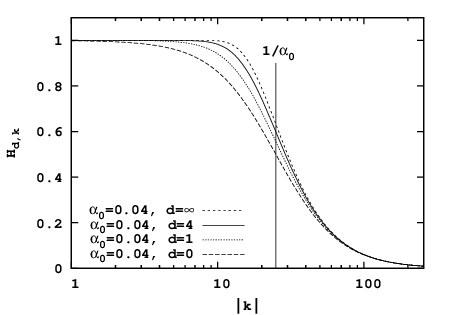}}
\end{figure}

We remark that this is the first time the exponential
filter $H_{\infty,k}$ has been derived and proposed for use in 
the context of turbulence modeling.
A Gaussian filter of the form $\exp(-\alpha_0^2|k|^2)$ was considered for the Leray and LANS-alpha models
by Geurts and Holm \cite{geurts2005}.
That filter is a mirror-image reflection of the one derived here:
Instead of preserving the cutoff in the high modes, 
it holds fixed the $d=0$ order of the low modes.

For smooth functions $u$ the filters $H_{d,k}$ have a consistency 
error given by
\begin{plain}$$
	u-H_{d,k} u =
	\cases{
		{\cal O}(\alpha_0^{2d+2})& for $d\in{\bf N}$\cr
		{\cal O}(\exp(-1/\lambda_0\alpha_0^2))&
		for $d=\infty$
}
	\qquad\hbox{as}\qquad
		\alpha_0\to 0
$$\end{plain}%
where~$\lambda_0$ is the smallest eigenvalue of the Stokes operator.
Therefore, the consistency error as a function of
the effective averaging length scale $\alpha_0$ is the same as
the results proved by Stolz, Adams and Kleiser 
\cite{stolz2001},
see also Dunca and Epshteyn \cite{dunca2006},
for the NS-alpha deconvolution model.
Moreover, the exponential filter obtained in the limit 
when $d\to\infty$ has exponentially small 
consistency error as $\alpha_0\to 0$.
We turn now to our computational results.

\section{Numerical Methods}

The vorticity formulations of the Navier--Stokes equations and 
the turbulence models described in the introduction are
particularly simple in two dimensions.
Using the notation
$\curl u=\partial u_2/\partial x_1 
	-\partial u_1/\partial x_2$, the 
two-dimensional incompressible Navier--Stokes equations
can be expressed as the scalar equation
\begin{equation}\label{vort}
	{\partial\omega\over\partial t}-\nu\Delta\omega
		+u\cdot\nabla\omega = g
	\qquad\hbox{where}\qquad
	\omega =\curl u
\end{equation}
and $g=\curl f$.
Similarly, the alpha turbulence models are all given by
\begin{equation}\label{mvort}
	{\partial m \over\partial t}-\nu\Delta m
		+\bar v\cdot\nabla m = g
	\qquad\hbox{where}\qquad
	H_d m = \curl \bar v
\end{equation}
and $H_d$ is the differential operator corresponding
to the symbol $H_{d,k}$ in the previous section.
We recall equations (\ref{mvort}) reduce to the LANS-alpha model 
when
$d=0$ and that the new exponential-alpha model is
obtained when $d=\infty$.

It is well known, see for example Temam~\cite{temam1995},
that the two-dimensional Navier--Stokes equations are
well-posed.  That is, 
these equations possess 
unique smooth solutions depending continuously on
the initial conditions
provided the force is sufficiently regular.
Foias, Holm and Titi \cite{foias2001} show that
three-dimensional LANS-alpha model 
is also well posed.
It follows, trivially, that the two-dimensional LANS-alpha
model is well posed.
Similar results hold for NS-alpha deconvolution model and 
also for the new exponential-alpha model.

\begin{figure}[h!]
	\centerline{\begin{minipage}[b]{0.75\textwidth}
    \caption{\label{forcefig}%
	Top left shows the contours
	of the force $g=\curl f$; in order following are 
	contours of $\omega$ at times $25\,000$,
	$50\,000$ and $100\,000$.}
	\end{minipage}}
    \medskip
\centerline{\hbox{\includegraphics[width=0.4\textwidth]{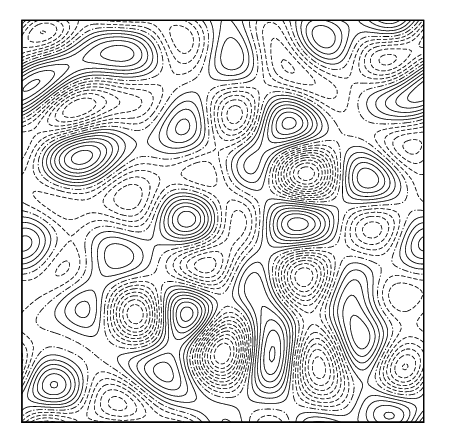}%
\includegraphics[width=0.4\textwidth]{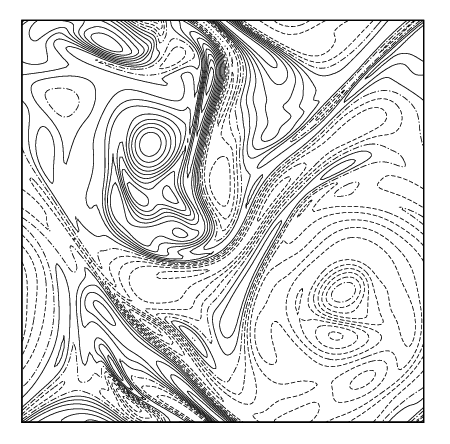}}}
\centerline{\hbox{\includegraphics[width=0.4\textwidth]{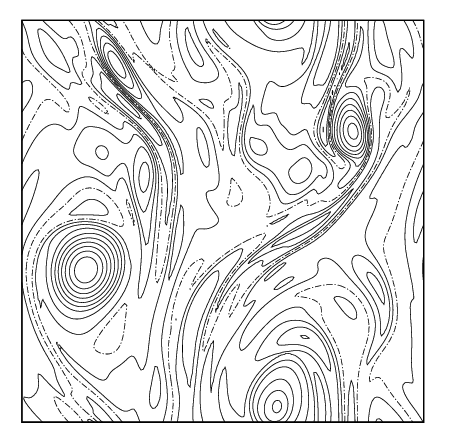}%
\includegraphics[width=0.4\textwidth]{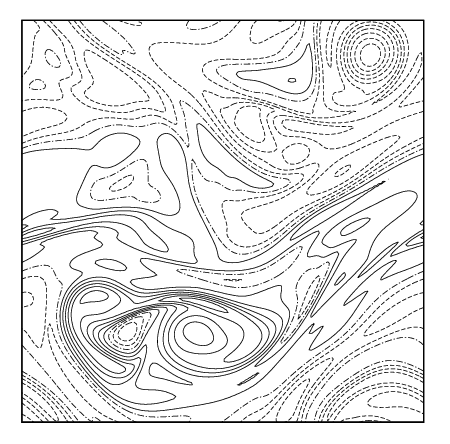}}}
\end{figure}

We shall consider a two-dimensional incompressible flow in a 
$2\pi$-periodic box forced by a 
$2\pi$-periodic body force.
Specifically, take $\Omega=[0,2\pi]^2$, the viscosity 
$\nu=0.0001$ and choose a
time-independent divergence-free body forcing $f$ supported on the
Fourier modes with $16 \le |k|^2\le 34$ 
such that $\|f\|_{L^2}=0.0025$ 
and for which the Grashof number is $G=\nu^{-2}\|f\|_{L^2}=250\,000$.
To obtain such a force, the amplitudes of the Fourier modes 
were chosen randomly and then rescaled to obtain the desired
Grashof number.
The exact function used here is depicted top left in 
Figure~\ref{forcefig} and originally appeared 
in Olson and Titi \cite{olson2003} where it was called $f_{25}$ to
indicate its support lied on an annulus about $|k|^2=25$ in 
Fourier space.

Since we have written our fluid equations 
in vorticity form, it is natural to compute the residual 
vorticity $\rho$ given by $\rho=\curl R$.
No generality is lost in doing this, 
because we may later recover $R$ by inverting the 
definition of $\rho$ subject to the condition $\nabla\cdot R=0$.
Note, as the velocity fields present in both the exact 
and approximate dynamics are divergence free,
so $R$ is divergence free.
Therefore, we plug $\omega$ into equation (\ref{mvort}) 
to obtain
$$
	d \rho=
	\big((\bar u\cdot\nabla\omega)-(u\cdot \nabla\omega)\big)dt.
$$
Integrating in time then yields
\begin{equation}\label{residR}
	\rho(t)=\int_0^t 
		\big((\bar u\cdot\nabla\omega)-(u\cdot \nabla\omega)\big)dt.
\end{equation}
It is worth remarking that at no point does the time evolution of 
the approximate dynamics~(\ref{mvort}) governing~$m$ 
enter into the computation of $\rho$.  
Indeed, sensitive dependence on initial conditions is well 
known for the dynamical systems studied here, which in turn, 
implies
there is no shadowing result that could be used to compare
separate evolutions of $\omega$ and $m$ over long periods of time.

Theoretically the evolution of $\omega$ should be determined by 
the exact dynamics of the two-dimensional incompressible 
Navier--Stokes equations.
As this is not possible in any numerical experiment, we therefore
consider two discrete dynamical systems
$$
	\omega^{n+1}=S(\omega^n)
\qquad\hbox{and}\qquad
	m^{n+1}=\tilde S(m^n)
$$
and suppose the discrete dynamics of 
$\omega^n$ are exact while $m^n$ evolves according to 
some approximate dynamics.  The discrete model error in the
approximate dynamics is then given by the residual
\begin{equation}\label{dexact}
	\rho^{n+1}=\rho^n+S(\omega^n)-\tilde S(\omega^n)
\qquad\hbox{where}\qquad
	\rho^0=0.
\end{equation}
Note that since (\ref{dexact}) represents an exact definition at
the discrete level, we obtain a method of computing the model error 
in the corresponding discrete alpha models to within the 
precision of the available floating-point arithmetic.

In the computations presented here the discrete
solution $\omega^n$, which we view as being governed by 
the exact dynamics, shall be 
given by the fully-dealiased spectral 
Galerkin method for approximating equations (\ref{vort}) in which 
the linear terms have been integrated exactly in time and the 
nonlinear term using an Euler method.
In particular, given $K\in\N$ fixed, let 
$$\K=\{\, (k_1,k_2)\in\Z^2\setminus\{0\} : 
	-K\le k_1,k_2\le K\,\}$$
and write
$$
\omega^n(x)=\sum_{k\in{\cal K}} \omega_k^n e^{i k\cdot x}
\qquad\hbox{and}\qquad
u^n(x)=\sum_{k\in{\cal K}} {i(k_2,-k_1)\over |k|^2} \omega_k^n
e^{i k\cdot x}.
$$
By definition, then, the exact dynamics are given by
\begin{equation}\label{exactd}
	\omega^{n+1}= S(\omega^n)
\qquad\hbox{for}\qquad
	n=0,1,2,\ldots
\end{equation}
where the discrete semigroup operator $S$
is given by 
\begin{plain}\begin{equation}\label{semid}\eqalign{
	S(\omega^n)_k=
		\Big\{\omega_k^n &- h 
		(u^n\cdot\nabla\omega^n)_k\Big\}e^{-\nu |k|^2 h}\cr
	&+ {2g_k\over \nu |k|^2}
		e^{-\nu |k|^2h/2}\sinh(\nu |k|^2h/2).
}\end{equation}\end{plain}%
While $S$ acts on the vorticity, an equivalent semigroup may be
defined which acts on the velocity.
For notational simplicity we 
shall refer to both semigroups as $S$.
Thus $u^{n+1}=S u^n$ shall mean $u^{n+1} = \curl^{-1} S \curl u^n$.

The corresponding approximate dynamics of our 
discrete alpha models are given by
\begin{equation}\label{approxd}
	m^{n+1}=\tilde S(m^n)
\qquad\hbox{for}\qquad n=0,1,2,\ldots.
\end{equation}
where
$$
	m^n=\sum_{k\in\K} m^n_k e^{ik\cdot x},\qquad
	\bar v^n = \sum_{k\in\K} 
	{i(k_2,-k_1)\over |k|^2} 
	H_{d,k} m_k e^{ik\cdot x}
$$
and
\begin{plain}$$\eqalign{
	\tilde S(m^n)_k=
		\Big\{m_k^n &- h 
		(\bar v^n\cdot\nabla m^n)_k\Big\}e^{-\nu |k|^2 h}\cr
	&+ {2g_k\over \nu |k|^2}
		e^{-\nu |k|^2h/2}\sinh(\nu |k|^2h/2).
}$$\end{plain}%

Note that $\omega_k^n$ may be viewed as a discrete approximation
of the continuous solution $\omega$ to equations (\ref{vort}) projected
onto the Fourier mode $\exp(ik\cdot x)$ at time $t_n=hn$ for some
time step $h>0$ and $m_k^n$ may be viewed as an identically discretized
approximation of the solution $m$ given by an alpha model.
Although more accurate and stable time stepping methods could be used,
the above is sufficient for our present study.

We now prove for $K$ fixed and $h$ small enough that
the discrete dynamical system (\ref{exactd}) possesses a unique 
global attractor $\A_{K,h}$.
This follows directly from following lemma which
shows the existence of an absorbing ball.
In order to keep track of the dimensional quantities which 
appear in the proof, define
$$\lambda_0=\min\{\, |k|^2:k\in\K\,\}=1$$ and 
recall that $g_k=0$ for $|k|^2>\lambda_M$
where $\lambda_M=34$.
\begin{lemma}\label{aball}
Let 
$$
	B=c_0 {\|f\|_{L^2}\over\nu\lambda_0^{1/2}}
\qquad\hbox{where}\qquad
c_0>6\lambda_M/\lambda_0.
$$
Given $K$ fixed and $L>0$, there is $h$ small enough and $N$
large enough such that $|w^0|<L$ implies $|w^n|<B$ for 
all $n\ge N$.
\end{lemma}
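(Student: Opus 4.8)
The plan is to prove the lemma by a discrete enstrophy estimate: I track the squared norm $X_n=|\omega^n|^2=\sum_{k\in\K}|\omega_k^n|^2$ and show it obeys a scalar recursion that drives it into the ball of radius $B$. First I would expose the structure of the semigroup~(\ref{semid}). Writing $P_k=e^{-\nu|k|^2h}$, $Q_k=1-P_k$, and letting $\omega^*_k=g_k/(\nu|k|^2)$ denote the steady Stokes vorticity (the fixed point of the exactly integrated linear-plus-forcing part), the update reads $\omega_k^{n+1}=P_k(\omega_k^n-hb_k^n)+Q_k\omega^*_k$, where $b^n=u^n\cdot\nabla\omega^n$ is the fully dealiased nonlinear term. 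Expanding $|\omega_k^{n+1}|^2$ and summing over $k\in\K$ yields a linear part $\sum_kP_k^2|\omega_k^n|^2\le e^{-2\nu\lambda_0h}X_n$, a linear forcing cross term $2\sum_kP_kQ_k\,\mathrm{Re}(\overline{\omega_k^n}\,\omega^*_k)$, and several remainders: the pure nonlinear square $h^2\sum_kP_k^2|b_k^n|^2$, the nonlinear cross term $-2h\sum_kP_k^2\,\mathrm{Re}(\overline{\omega_k^n}b_k^n)$, and two forcing remainders $\sum_kQ_k^2|\omega^*_k|^2$ and $-2h\sum_kP_kQ_k\,\mathrm{Re}(\overline{b_k^n}\,\omega^*_k)$.

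The crux is the nonlinear cross term. Since $u^n$ is divergence free and the dealiasing is exact, the orthogonality $(u^n\cdot\nabla\omega^n,\omega^n)=0$ holds, so $\sum_k\mathrm{Re}(\overline{\omega_k^n}b_k^n)=0$; hence the cross term equals $2h\sum_k(1-P_k^2)\,\mathrm{Re}(\overline{\omega_k^n}b_k^n)$, and because $1-P_k^2\le 2\nu|k|^2h$ on the fixed set $\K$ it is of order $h^2$. This is the step that matters: the nonlinearity never feeds the enstrophy at leading order, exactly as in the continuous estimate, which is precisely why the absorbing radius will turn out to be independent of $K$. Because $K$ is fixed the nonlinearity is a bounded quadratic, $|b^n|\le C_K|\omega^n|^2$, so the nonlinear square, the corrected cross term, and the two forcing remainders are all bounded by $h^2$ times a fixed polynomial $p(X_n)$. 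Treating the linear forcing cross term with $Q_k\le\nu|k|^2h\le\nu\lambda_Mh$ on the support of $g$, followed by Young's inequality, gives $2\sum_kP_kQ_k\,\mathrm{Re}(\overline{\omega_k^n}\omega^*_k)\le \nu\lambda_0hX_n+\nu(\lambda_M^2/\lambda_0)h|\omega^*|^2$. Collecting terms and absorbing everything lower order in $h$ into $h^2p$, I arrive at an affine-contraction recursion
$$
  X_{n+1}\le (1-\nu\lambda_0 h)\,X_n+\nu\frac{\lambda_M^2}{\lambda_0}\,h\,|\omega^*|^2+h^2p(X_n).
$$
The constant term is $O(h)$, so the fixed point of the affine part, $X_*=(\lambda_M/\lambda_0)^2|\omega^*|^2$, is independent of $h$; inserting $|\omega^*|\le\|g\|_{L^2}/(\nu\lambda_0)$ together with $\|g\|_{L^2}\le\lambda_M^{1/2}\|f\|_{L^2}$ shows that the hypothesis $c_0>6\lambda_M/\lambda_0$ is exactly a bound that forces $B^2>X_*$ with strict margin.

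It remains to order the quantifiers as the statement requires. Given $L$, I choose $h$ so small that $|b^n|\le C_K|\omega^n|^2$ and all the $O(h^2)$ estimates are valid on $\{X\le L^2\}$, and so small that the perturbed fixed point $X_*'=X_*+h\,p(L^2)/(\nu\lambda_0)$ still satisfies $X_*'<B^2$. Then the recursion is dominated by the affine map with fixed point $X_*'<B^2\le L^2$; consequently $\{X\le L^2\}$ is forward invariant and $X_n-X_*'\le(1-\nu\lambda_0h)^n(L^2-X_*')$, so $X_n<B^2$ after $N\approx\log\!\big((L^2-X_*')/(B^2-X_*')\big)/(\nu\lambda_0h)$ steps (and trivially for all $n$ when $L\le B$), which is the ``$N$ large enough'' of the statement; the strict margin makes $\{X<B^2\}$ forward invariant thereafter. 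Thus $|\omega^0|<L$ implies $|\omega^n|<B$ for all $n\ge N$, and the absorbing ball in turn yields the global attractor $\A_{K,h}$ on the finite-dimensional phase space by the standard argument for dissipative maps.

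The main obstacle is the near-cancellation step: one must verify that the exact dealiasing genuinely preserves $(u^n\cdot\nabla\omega^n,\omega^n)=0$ and then extract the clean factor $1-P_k^2=O(h)$, since it is this cancellation that keeps $B$ free of $K$ and reproduces the continuous enstrophy bound. Once the scalar recursion is in hand, bounding the $O(h^2)$ remainders uniformly on $\{X\le L^2\}$, matching the constants to $c_0>6\lambda_M/\lambda_0$, and running the quantifier argument are all routine.
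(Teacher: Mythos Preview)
Your argument is correct and follows the same energy-method template as the paper, but you organize the cancellation differently. The paper first multiplies the update~(\ref{semid}) by $e^{\nu|k|^2h}$ before taking the $L^2$ norm; this strips the weights $P_k$ off the nonlinear part, so the orthogonality $(u^n\cdot\nabla\omega^n,\omega^n)=0$ yields $|\omega^n-hb^n|^2=|\omega^n|^2+h^2|b^n|^2$ \emph{exactly}, with no residual cross term to track. After bounding the forcing contribution by $\nu\lambda_Mh\cdot 6|f|/(\nu\lambda_0^{1/2})$ (this is where the factor~$6$ arises, via $e^{\nu\lambda_Mh}-1\le 3\nu\lambda_Mh$), the paper obtains a purely multiplicative contraction $|\omega^{n+1}|^2\le\alpha(h)e^{-2\nu\lambda_0h}|\omega^n|^2$ on the annulus $B\le|\omega^n|\le L$, and the hypothesis $c_0>6\lambda_M/\lambda_0$ enters as the condition $\alpha'(0)<2\nu\lambda_0$ needed to make $\gamma=\alpha(h)e^{-2\nu\lambda_0h}<1$ for small $h$. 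You instead keep the weights $P_k$ inside, so the nonlinear cross term $-2h\sum_kP_k^2\mathrm{Re}(\overline{\omega_k^n}b_k^n)$ only vanishes to leading order, leaving an $O(h^2)$ remainder you absorb into $p(X_n)$; in exchange you get an explicit affine recursion whose fixed point $X_*$ you can read off and compare directly to $B^2$. Both routes land on the same conclusion with the same dependence on $c_0$; the paper's trick of pre-multiplying by $e^{\nu|k|^2h}$ is a shade cleaner because it eliminates one error term, while your affine formulation makes the asymptotic enstrophy level (and hence the role of the constant $c_0$) more transparent.
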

\begin{proof} From (\ref{exactd}) we have
$$
	\omega^{n+1}_ke^{\nu |k|^2 h}=\omega^n_k - h(u^n\cdot\nabla \omega^n)_k
		+{2g_k\over \nu |k|^2}\big(e^{\nu |k|^2 h}-1\big).
$$
Therefore
$$
	\sum_{k\in\K} 
	\omega^{n+1}_ke^{\nu |k|^2 h} e^{ik\cdot x}=\omega^n -
		h(u^n\cdot\nabla \omega^n)
		+\sum_{k\in\K}{2g_k\over \nu |k|^2}\big(e^{\nu |k|^2 h}-1\big)
		e^{ik\cdot x}.
$$
Let 
$$|\omega^n|^2=\|\omega^n\|_{L^2}^2=4\pi^2\sum_{k\in\K} |\omega^n_k|^2.$$
Since all norms are equivalent in finite dimensions
spaces, then there exists $C_K$ such that
$
	\big|u^n\cdot\nabla \omega^n\big|^2\le C_K |\omega^n|^4.
$
Moreover, since $\big(u^n\cdot\nabla \omega^n,\omega^n)=0$, then
$$
	\big|\omega^n-h(u^n\cdot\nabla \omega^n)\big|^2
		=\big|\omega^n\big|^2+h^2\big|u^n\cdot\nabla \omega^n\big|^2
		\le \big|\omega^n\big|^2 \big(1+C_K h^2|\omega^n|^2\big).
$$
For $h$ such that $\nu\lambda_M h<1$ we have $e^{\nu\lambda_Mh}<3$.
It follows that
\begin{plain}$$\eqalign{
		\Big|\sum_{k\in\K}{2g_k\over \nu |k|^2}\big(e^{\nu |k|^2 h}-1\big)
		e^{ik\cdot x}\Big|
	&=
	\Big(4\pi^2\sum_{k\in\K} {4|g_k|^2\over \nu^2|k|^4}
		\big(e^{\nu |k|^2 h}-1\big)^2\Big)^{1/2}\cr
	&\le
	\big(e^{\nu \lambda_M h}-1\big)
		{2|f|\over\nu\lambda_0^{1/2}}
	\le
	\nu\lambda_M h {6|f|\over\nu\lambda_0^{1/2}}.
}$$\end{plain}%
Therefore,
\begin{plain}$$\eqalign{
	|\omega^{n+1}|^2 e^{2\nu\lambda_0 h}
		&\le \big|\omega^n\big|^2\big(1 + C_Kh^2|\omega^n|^2\big)\cr
		&\quad+
			2\big|\omega^n\big|\big(1+C_Kh^2|\omega^n|^2\big)^{1/2}
			\nu\lambda_M h
		{6|f|\over \nu\lambda_0^{1/2}}\cr
		&\quad+
			\nu^2\lambda_M^2 h^2 {36|f|^2\over\nu^2\lambda_0}\cr
		&\le 
		\big|\omega^n\big|^2
		\big(1 + C_Kh^2|\omega^n|^2\big)
		(1 + \nu\lambda_0h) \cr
		&\quad+
		\Big({\lambda_M\over\lambda_0}+\nu\lambda_Mh\Big)
			\nu\lambda_Mh
			{36|f|^2\over\nu^2\lambda_0}.
}$$\end{plain}%
Now, if $B\le |\omega^n|\le L$, then
$$
	|\omega^{n+1}|^2 \le \alpha(h) e^{-2\nu\lambda_0h} |\omega^n|^2
$$
where
$$
	\alpha(h)=
		\big(1 + C_Kh^2 L^2\big)
        (1 + \nu\lambda_0h)
		+
        36\Big({\lambda_M\over\lambda_0}+\nu\lambda_Mh\Big)
            {\nu\lambda_Mh\over c_0^2}.
$$
Since $\alpha(h)\to 1$ as $h\to 0$ and
$$
	\alpha'(0)=\nu\lambda_0+36 {\nu\lambda_M^2\over c_0^2\lambda_0}
		<2\nu\lambda_0,
$$
there is $h$ small enough that 
$\gamma=\alpha(h)e^{-2\nu\lambda_0h}<1$ as well as $\nu\lambda_Mh<1$.
Let $N$ be large enough that $L\gamma^N<B$.
Since $|\omega^n|<B$ implies
$
	|\omega^{n+1}|^2\le \gamma B^2 < B^2,
$
once $|\omega^n|$ falls below $B$ it stays below $B$.
It follows that $|\omega^{n+1}|<B$ for all $n\ge N$,
which completes the proof of the lemma.
\end{proof}

Up to the constant $c_0$, the bound on $B$ 
given above is the same as the usual estimate
on the absorbing ball of the two-dimensional incompressible
Navier--Stokes equations (\ref{vort}), see, for example \cite{temam1995}.
Note that the estimate
on the size of $h$ depends on $C_K$, which we have not explicitly
computed here.  Lemma~\ref{aball} is important theoretically; however,
as with other {\it a~priori\/} estimates of this type, 
the resulting bounds on $h$ and $B$ differ by many orders of 
magnitude from those suggested by the numerics.
A similar theorem could be proved about the discrete
alpha models.
As only 
$\omega^n$ is used when computing 
the residual $\rho^n$, we omit that theorem and proof.

Consider next the computation of the discrete residual.  
For the discrete dynamical systems given by (\ref{exactd}) and
(\ref{approxd}) we obtain
\begin{equation}\label{residd}
	\rho^{n+1}_k
	=\rho^n_k+h \Big\{(\bar u^n\cdot\nabla w^n)_k
	- ( u^n\cdot\nabla w^n)_k\Big\}
	e^{-\nu |k|^2 h}.
\end{equation}
We remark again that, aside from the model error 
which we are trying to compute,
the only error which enters into the computation of
$\rho^n$ comes from the rounding present in 
the floating-point arithmetic.
As in the continuous case, the discrete residual velocity
may be obtained from the residual vorticity.  In particular,
$R_k^n = i\rho_k^n(k_2,-k_1)/ |k|^2$.

Note that $R_k^n$ reflects the modeling error made when replacing
our fully discrete dynamical system by a similarly discretized
alpha model.
Similar results should hold if other numerical schemes, 
such as BDF2, were used.
The method described here could also be used to compute 
the evolution of the residual error in other turbulence models.
Moreover, it should be possible to track the residual error present in 
a particular discretization of a continuous dynamical system,
for example, by comparing different numerical schemes.
For work along these lines please see Banks, Hittinger, Connors
and Woodward \cite{banks2013} and references therein.
While it may be possible that similar techniques could be used
to relate the discrete 
residuals $R_k^n$ that we compute here to the model error
in the fully-continuous alpha models, 
we do not pursue this direction of inquiry.

The computations which appear in this paper were implemented
using the MIT/Intel Cilkplus parallel processing extensions to 
the C programming language and compiled using GCC version 5.1.
The fast Fourier transforms used to compute the non-linear 
term were performed using the FFTW3 software library.
All computations were carried out using IEEE~754 double-precision 
floating point on the PDE Wulf cluster and the UNR Grid 
at the University of Nevada Reno.
The final computations presented here took a total of $38\,400$
core-hours of processing time using Intel Xeon E5-2650 CPUs.
The Navier--Stokes solver described in \cite{olson2003}
was used to verify the correctness of our computations.

The specific discretization considered here uses
a~$256\times 256$ spatial grid with~$K=85$ and a 
time step of~$h=25/4096$.
For practical reasons $h$ has been taken to be 
many orders of magnitude larger than the bounds 
given in Lemma~\ref{aball}.
Numerically, this choice of parameters leads
to a stable numerical scheme with a 
Courant--Friedrichs--Lewy condition number of
$$
	{\rm CFL}={Kh\over 2\pi}\sup\big\{|u^n(x_{ij})|_1: 
		x_{ij}\in\Omega\hbox{ and }
		t_n\le 100\,000 \big\} \approx 0.18
$$
where 
$x_{ij}=2\pi (i,j)/256$
and $|(u_1,u_2)|_1=|u_1|+|u_2|$.  We henceforth assume
that (\ref{exactd}) possesses a 
global attractor $\A_{K,h}$ suitable for our study.
Starting from the initial condition $u^0(x)=0$, we
obtain by time $t=25\,000$ a complex time-dependent velocity field
whose 
statistical properties appear to have reached a steady state.  

To further characterize the time scales in our computation, we 
estimate the eddy turnover time $\tau$ using the
definition of Gesho, Olson and Titi~\cite{gesho2015} as
$$\tau=4\pi^2\sum_{r=1}^\infty r^{-1}E(r)/
\Big(\sum_{r=1}^\infty E(r)\Big)^{3/2}
	\approx 92.05,
$$
where $E(r)$, see Figure \ref{wspecfig}, is the 
time-averaged energy spectrum given by
$$
	E(r)={4\pi^2\over {T-T_0}}\int_{T_0}^T
		\sum_{k\in\J_r}
		|u_k(t)|^2 dt
$$
averaged from $T_0=25\,000$ to $T=100\,000$
where 
$$\J_r= \{\,k\in\Z^2 :r-0.5<|k|\le r+0.5\,\}.$$
Note 
that the flow undergoes an additional $814$ eddy turnovers
on this time interval.
We presume, therefore, that
$u^n$ lies very near the global attractor of our
discrete dynamical system for $t_n\ge 100\,000$.
We now describe the discrete ensemble averages that will be 
used to compute the root-mean-squared residual error in 
our discrete alpha models.

\begin{figure}[h!]
	\centerline{\begin{minipage}[b]{0.80\textwidth}
    \caption{\label{pispin}%
	Long-time evolution of 100 solutions of the two-dimensional
	Navier--Stokes equations with randomly chosen initial
	conditions leading to 100 different points on the attractor.
	Note that the graph has been
    broken and data omitted between times $t_n=15\,000$ 
	and $t_n=85\,000$.}
	\end{minipage}}
	\centerline{\includegraphics[height=0.45\textwidth]{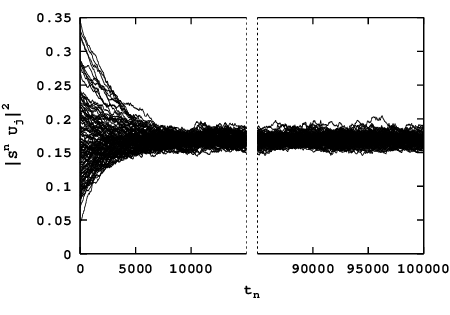}}
\end{figure}

The set $\U$, which forms the support of the probability 
measure $\mu$ used to define our ensemble averages,
was taken to consist of 100 points, each obtained by 
choosing a random velocity field $U_j$ and then evolving that
field forward $T=100\,000$ units in time.
Figure \ref{pispin} shows the
evolution of $\|S^n U_j\|_{L^2}^2$ for $j=1,2,\ldots,100$.
The statistical properties of the energy appear to have 
reached a steady state by $t_n=25\,000$ and by time $t_n=100\,000$ 
each flow has undergone approximately $1000$ large-eddy turnovers.  
We presume, for the same reasons as before, that each element of
$$
	\U=\{\, S^{n} U_j : j=1,2,\ldots,100\hbox{ and }t_n=100\,000\,\}
$$
is near the discrete global attractor $\A_{K,h}$.

\begin{figure}[h!]
	\centerline{\begin{minipage}[b]{0.75\textwidth}
    \caption{\label{ppara}%
	Locations in the energy-enstrophy plane of the 100
	points on the attractor $u_0\in\U\subseteq\A$ used 
	for the ensemble averages.}
	\end{minipage}}
	\centerline{\includegraphics[height=0.45\textwidth]{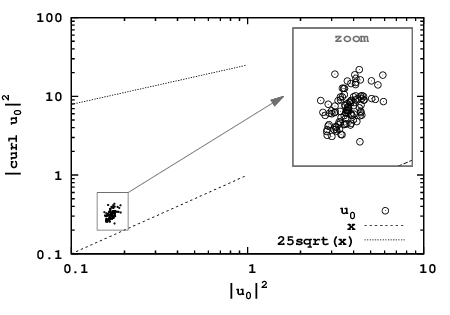}}
\end{figure}

Before proceeding we further characterize 
the ensemble averages used in our computations.
Figure \ref{ppara} plots the points $u_0\in\U$
in the energy-enstrophy plane.
The fact that all 100 points lie between the parabola and the
line is consistent with the analysis of Dascaliuc,
Foias and Jolly \cite{dascaliuc2005} on the location of
the global attractor for the two-dimensional
incompressible Navier--Stokes equations.
Observe that the points are clustered together in
a small region of the plane, but appear randomly 
distributed within that region.  

The discrete ensemble averages  
may be defined as follows.
For each $u_0\in\U$ let $R_k^n(u_0)$ be the
residual obtained by plugging the solution $u_k^n$
with $u^0=u_0$ into (\ref{residd}).
Take $\mu$ in (\ref{ensemble}) 
to be the uniform probability measure supported on $\U$.
It follows that
\begin{equation}\label{ensembled}
	{\cal E}_{\rm rms}^n=
		\big\langle \|R^n\|^2 \big\rangle^{1/2}
		= 
		\Big\{{4\pi^2\over |\U|}
		\sum_{u_0\in\U} 
			\sum_{k\in\K} |k|^2|R_k^n(u_0)|^2\Big\}^{1/2}
\end{equation}
where $|\U|=100$ denotes the cardinality of $\U$.  
Characterizing how ${\cal E}_{\rm rms}^n$ 
depends on $t_n$ will be the main focus of the
computational results in the next section.

\section{Computational Results}\label{compres}

For our numerical study, we compute the root-mean-squared residual 
error for nine different turbulence models, determined
by taking $d$ and $\alpha_0$ such that $d\in\{0,4,\infty\}$ 
and $\alpha_0\in\{0.01,0.04,0.20\}$.
Intuitively, 
for small values of $\alpha_0$ 
we expect an alpha model to function as a 
subgrid-scale model \cite{chen1999c,kim2011} and 
for large values of $\alpha_0$
as a Reynolds stress closure 
\cite{chen1998,chen1999a,chen1999b}.
To see how the different values of $\alpha_0$ considered in our
numerical experiments compare with the energetics of a
typical flow on the global attractor of~(\ref{exactd}), Figure~\ref{wspecfig} 
plots the energy spectrum $E(r)$ of the solution $u^n$ with initial 
condition $u^0=0$ averaged between times $T_0=25\,000$ and 
$T=100\,000$ against the vertical lines $|k|=1/\alpha_0$.
Each of these vertical lines represent the wavenumber
at which the Fourier modes are attenuated by 50 percent
in the smoothing filter of the original LANS-alpha model.
As previously illustrated in Figure~\ref{dnksfig}, larger values 
of $d$ lead to slightly less attenuation at this wavenumber.
We remark
that the smallest averaging
length scale $\alpha_0=0.01$ leads to
smoothing filters affecting modes in the dissipation
range of the energy spectrum, that $\alpha_0=0.04$ 
also affects modes
in the inertial range and that the relatively large value of 
$\alpha_0=0.2$ affects all the modes including those in the forcing 
range and inverse cascade.  

\begin{figure}[h!]
	\centerline{\begin{minipage}[b]{0.75\textwidth}
    \caption{\label{wspecfig}%
	The average energy spectrum of $u$ in relation to
	the wavenumbers $|k|=1/\alpha_0$ corresponding to
    three choices of $\alpha_0$.}
	\end{minipage}}
	\centerline{\includegraphics[height=0.45\textwidth]{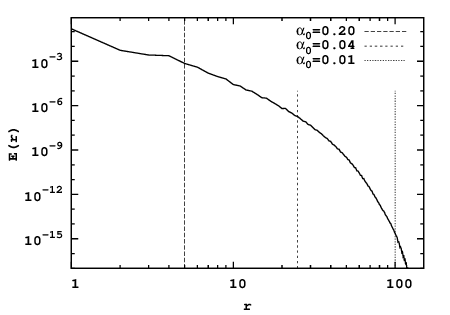}}
\end{figure}

Before computing the ensemble averages defined by equation
(\ref{ensembled}) it is informative to directly examine the 
residual error computed along a single representative
trajectory lying on the global attractor.
Let $u_0\in\A_{K,h}$ be fixed.  
Curves showing the evolution of the norm 
of $R^n(u_0)$ for 
the nine different turbulence models studied
are plotted in Figure~\ref{pltpath}.
For $\alpha_0=0.20$ the residual error is the largest and appears to
grow linearly with time after $t_n\ge 4000$ for each value of $d$.
For $\alpha_0=0.04$ the curves group together in the middle
of the graph and appear to grow as $\sqrt{t_n}$.  In this group
the curve corresponding to $d=0$ deviates from the 
other two when $t_n\ge 20000$ and starts to grow at a slightly
faster rate.  This deviation, though slight in the log-log plot,
is significant as further numerics shall indicate.
For $\alpha_0=0.01$ the residual error is
the least and separate curves appear at bottom of the graph.
Each of these curves
appear to grow as $\sqrt{t_n}$ over the entire range.
Therefore, even without taking ensemble averages, the 
differences in the growth rates of the residual error
described in our main result can be observed for the 
different turbulence models.

\begin{figure}[h!]
	\centerline{\begin{minipage}[b]{0.80\textwidth}
    \caption{\label{pltpath}%
	Evolution of the residual error along a single trajectory.
	The top three curves correspond to $\alpha_0=0.20$, the middle
	three to $\alpha_0=0.04$ and the bottom three to 
	$\alpha_0=0.01$.  Values for $d$ are as indicated.}
	\end{minipage}}
	\centerline{\includegraphics[height=0.45\textwidth]{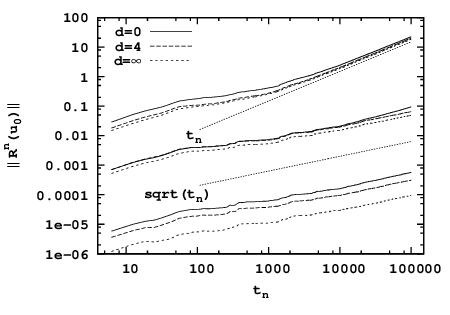}}
\end{figure}

\begin{table}[h!]
	\centerline{\begin{minipage}[b]{0.75\textwidth}
    \caption{\label{mineps}%
	Numerical lower bounds on $
	\sup\big\{\,\langle r'(t)^2\rangle^{1/2}:\|\phi\|=1\,\big\}$ 
	for $t\in[0,T]$ where $T=100\,000$ 
	obtained by taking $\phi\in\{\phi_1,\phi_2,\phi_3\}$.
	}
	\end{minipage}}
	\medskip
    \centering
    \begin{tabular}{ l c | c c c  }
    \hline\hline
\vphantom{\raise8pt\hbox{H}}%
	$\phantom{.}\alpha_0$&$d$&$\phi_1$&$\phi_2$&$\phi_3$\\ [0.5ex]
	\hline
\vphantom{\raise8pt\hbox{H}}%
0.01&0&$3.7383\times10^{-09}$&$3.7421\times10^{-09}$&$2.3073\times10^{-09}$ \\
0.01&4&$9.3591\times10^{-10}$&$7.0558\times10^{-10}$&$5.1470\times10^{-10}$ \\
0.01&$\infty$&$1.3167\times10^{-10}$&$1.1054\times10^{-10}$&$7.1270\times10^{-11}$ \\
0.04&0&$6.9357\times10^{-07}$&$8.1933\times10^{-07}$&$4.0261\times10^{-07}$ \\
0.04&4&$8.6752\times10^{-07}$&$5.9190\times10^{-07}$&$4.5601\times10^{-07}$ \\
0.04&$\infty$&$5.5177\times10^{-07}$&$3.4297\times10^{-07}$&$2.6645\times10^{-07}$ \\
0.2&0&$6.7480\times10^{-05}$&$2.0257\times10^{-04}$&$4.8341\times10^{-05}$ \\
0.2&4&$4.5501\times10^{-05}$&$1.8437\times10^{-04}$&$3.5777\times10^{-05}$ \\
0.2&$\infty$&$4.5846\times10^{-05}$&$1.6867\times10^{-04}$&$3.5809\times10^{-05}$ \\
[1ex]
\hline
\end{tabular}
\end{table}

Before proceeding to the computation of ${\cal E}_{\rm rms}^n$,
we verify that our ensemble averages satisfy the 
assumption (\ref{muass}) that
$\langle r'(t)^2\rangle^{1/2}$ has a positive lower bound
which is uniform in time.
As it is impossible to take the supremum over all values of 
$\phi$ choose
$\phi\in \{\phi_1,\phi_2,\phi_3\}$ with
\begin{equation}\label{zetav}
\phi_1=
{P_H(u_0\cdot\nabla u_0)\over
\|P_H(u_0\cdot\nabla u_0)\|}
	,\qquad
\phi_2={f\over \|f\|}
	\qquad\hbox{and}\qquad
\phi_3={u_0\over\|u_0\|}.
\end{equation}
Here $u_0={\rm curl}^{-1}\omega(T)$ where 
$\omega$ is the solution depicted in Figure \ref{forcefig}
at $T=100\,000$
and $P_H$ is the $L^2$ projection onto the 
divergence-free elements of $V$.
The computational results given in Table \ref{mineps} indicate 
that all three choices of $\phi$ yield similar 
minimum values for $\langle r'(t)^2\rangle^{1/2}$ and
that each of these are positive.
Moreover, with one minor exception, the lower bounds on 
$\langle r'(t)^2\rangle^{1/2}$ 
decrease as both $\alpha_0$ decreases and as $d$ increases.

\begin{figure}[h!]
	\centerline{\begin{minipage}[b]{0.80\textwidth}
    \caption{\label{pravg}%
	Evolution of ${\cal E}_{\rm rms}^n$ for nine different
	choices of parameters.  The top three 
	curves correspond to $\alpha_0=0.20$, the middle
	three to $\alpha_0=0.04$ and the bottom three to 
	$\alpha_0=0.01$.  Values for $d$ are as indicated.}
	\end{minipage}}
	\centerline{\includegraphics[height=0.45\textwidth]{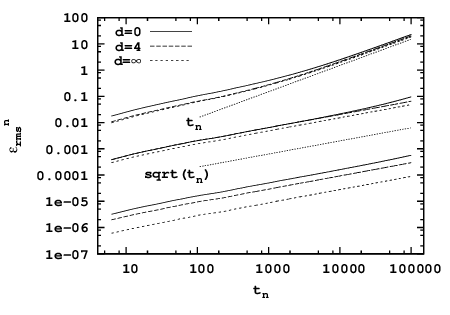}}
\end{figure}

We turn now to our main result, the computation of the 
root-mean-square residual error.
Curves showing the evolution of ${\cal E}_{\rm rms}^n$ for
$d\in\{0,4,\infty\}$ and $\alpha_0\in\{0.01,0.04,0.20\}$
are plotted in Figure \ref{pravg}.  
In this figure the curves appear smoother but are otherwise
similar to those in Figure \ref{pltpath}, especially for
large values of $t_n$.
To further characterize the growth of the residual error,
we find a least squares fit for the parameters $C_1$ and $C_2$ 
such that 
$$
	({\cal E}_{\rm rms}^n)^2\approx C_2 t_n^2 + C_1 t_n.
$$
If the residual error were comprised of a stochastic 
force plus a systematic bias,
then comparing with (\ref{fiterr}) would allow us to 
estimate ${\Tr Q}\approx C_1$ and ${\|F_b\|^2}\approx C_2.$
Since the residual actually comes from a deterministic dynamical 
system, this is not the case.  
However, the intuitive notion that a good turbulence model
should have an unbiased residual error implies that the
term represented by $C_2 t_n^2$ should be small compared to $C_1 t_n$.
To characterize the relative size of these two terms at the end 
of each computational run we define the dimensionless ratio 
$\eta=C_2T/C_1$.

Table \ref{leastsq} reports the values of $C_1$, $C_2$
and $\eta$ for the computational runs given in Figure \ref{pravg}.
When $\alpha_0$ is fixed, the estimates of
$C_1$ and $C_2$ have similar orders of magnitude for
different values of $d$ that generally decrease as $d$ increases.
The values of $\eta$ are nearly the same when $d=4$
or $d=\infty$ but differ when $d=0$.  
It should be pointed out that,
although $\eta$ depends directly on the length 
$T=100\,000$ of the computational run, it is still 
meaningful to compare the relative sizes of $\eta$ for 
different choices of $\alpha_0$ and $d$ while keeping 
$T$ fixed.

\begin{table}[h!]
	\centerline{\begin{minipage}[b]{0.75\textwidth}
    \caption{\label{leastsq}%
	Least squares fit of $({\cal E}_{\rm rms}^n)^2\approx
		C_2 t_n^2 + C_1 t_n$ and the ratio
		$\eta=C_2T/C_1$ at the end
		of the computation when $T=100\,000$.
	}
	\end{minipage}}
	\medskip
    \centering
    \begin{tabular}{ l c | c c | r }
    \hline\hline
\vphantom{\raise8pt\hbox{H}}%
    $\phantom{.}\alpha_0$ & $d$ & $C_1$ & $C_2$ 
		& $\eta\phantom{2.}$ \\ [0.5ex]
    \hline
\vphantom{\raise8pt\hbox{H}}%
$0.01$&$0$&
$2.513\times10^{-12}$&$7.086\times10^{-18}$&0.28 \\ 
$0.01$&$4$&
$8.489\times10^{-13}$&$2.733\times10^{-19}$&0.03 \\ 
$0.01$&$\infty$&
$7.723\times10^{-14}$&$1.698\times10^{-20}$&0.02 \\ 
$0.04$&$0$&
$4.066\times10^{-08}$&$5.010\times10^{-13}$&1.23 \\ 
$0.04$&$4$&
$4.108\times10^{-08}$&$1.100\times10^{-14}$&0.03 \\ 
$0.04$&$\infty$&
$2.306\times10^{-08}$&$2.747\times10^{-15}$&0.01 \\ 
$0.2$&$0$&
$1.179\times10^{-04}$&$5.019\times10^{-08}$&42.56 \\ 
$0.2$&$4$&
$4.175\times10^{-05}$&$3.985\times10^{-08}$&95.47 \\ 
$0.2$&$\infty$&
$4.319\times10^{-05}$&$3.358\times10^{-08}$&77.75 \\ 
[1ex]
\hline
\end{tabular}
\label{regress}
\end{table}

The similarity between the entries in Table~\ref{leastsq}
for $d=4$ and $d=\infty$ is consistent with~\cite{stolz2001} 
wherein it is reported that
for all tested applications $d=3$ already gives acceptable 
results, and that choosing $d$ larger than $5$ does not 
improve the results significantly.
The value $\alpha_0=0.20$ leads to $\eta\gg 1$ for every choice
of $d$, which suggests some sort of linearly growing bias 
dominates the residual error when $\alpha_0$ is large.
When $\alpha_0=0.04$ and $d=0$ the time evolution of the residual
error deviated slightly from the line $\sqrt{t_n}$ in 
Figure \ref{pravg}.
For this choice of parameters Table \ref{leastsq} indicates
that $\eta\approx 1.23$. 
This means that by the end of the computational run
the linearly growing part of the residual error contributes 
more than 50 percent to the total error.  While the exact
balance between the two terms depends on $T$,
it is interesting that the value of $\eta$ in
the $d=0$ case differs from the $d=4$ 
and $d=\infty$ cases by a couple orders of magnitude
when $\alpha_0=0.04$.
While the value $\alpha_0=0.01$ leads to
$\eta<1$ for every choice of $d$, it is again
notable that $d=0$ leads to a larger $\eta$.
We conclude by observing that $\eta\ll 1$ implies the linearly
growing term intuitively identified as bias is 
negligible over a time period of more than $1000$ large-eddy turnovers 
and the residual error, though deterministic, behaves as 
if the model error were given by stochastic force.
The next section provides additional analysis
which clarifies this point and further characterizes the bias as
well as the time and spatial correlations in the residual error.

\section{Further Analysis}\label{furan}

This section 
further
characterize the residual errors computed in the previous section.
By expanding upon the previous section, our goal is to determine to what 
extent the deterministic model errors considered here behave
as the spatially-correlated and temporally-white Gaussian processes
employed in theoretical works on data assimilation such as 
\cite{hoang2014}.

\begin{figure}[h!]
	\centerline{\begin{minipage}[b]{0.75\textwidth}
    \caption{\label{bias}%
	Contours of $\langle\rho^n\rangle$ at time $t_n=100\,000$ 
	for selected set of values for $\alpha_0$ and $d$.}
	\end{minipage}}
    \medskip
\centerline{\hbox{\includegraphics[width=0.4\textwidth]{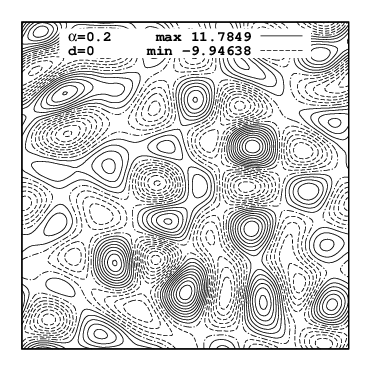}%
\includegraphics[width=0.4\textwidth]{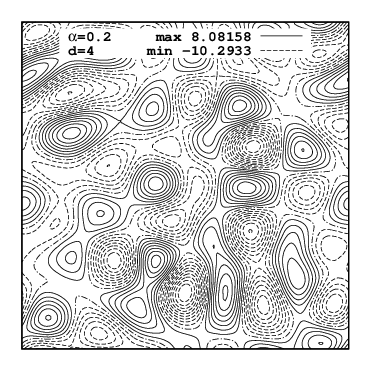}}}
\centerline{\hbox{\includegraphics[width=0.4\textwidth]{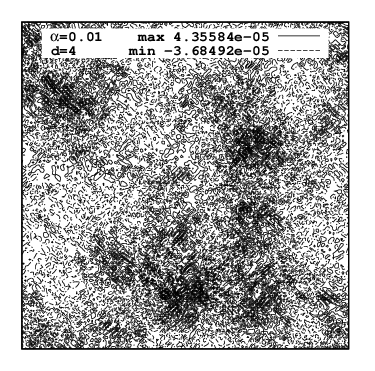}%
\includegraphics[width=0.4\textwidth]{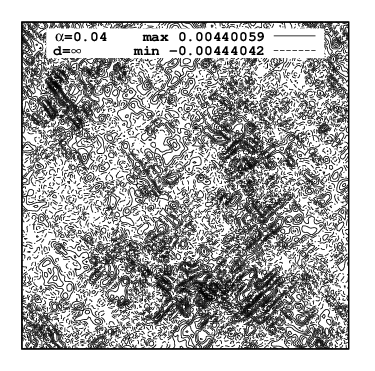}}}
\end{figure}

Let us begin by examining the ensemble averages of the residual error 
$$
	\langle \rho^n\rangle = 
		{1\over |\U|}\sum_{u_0\in\U} \rho^n(u_0)
$$
at the end of each computational run when $t_n=100\,000$.  
Figure \ref{bias} 
illustrates 
four examples from the nine choices of parameters studied:
two cases when $\eta\ll 1$ and when $\eta\gg 1$. 
When $\eta\ll 1$ the ensemble average of the residual error
looks almost random.
When $\eta\gg 1$, the ensemble average of the residual error
looks remarkably like the forcing function
depicted in Figure \ref{forcefig}.
While it is not surprising that the residual error
has a bias around the body forcing when the averaging
length scale is large, it is surprising that this bias
is of opposite signs when $d=0$ compared to when $d=4$.
This change in sign suggests an alpha model may
exist for some fractional value of $d\in (0,4)$ that 
minimizes the bias around the body forcing for large values 
of $\alpha_0$.
Since taking $\alpha_0$ large translates into computational
savings, increasing the accuracy in this situation by removing 
bias could be of practical importance.
Note that 
without taking ensemble averages the plots of
$\rho^{n}$ when $t_n=100\,000$ look essentially the same as 
those depicted for the ensemble averages in Figure \ref{bias}.
In particular,
the sign differences between $d=0$ and $d=4$ when 
$\alpha_0=0.2$ are consistent 
across all 100 independent trajectories considered in 
our study.
Finally it is worth mentioning that the sign of the bias 
around the forcing function doesn't oscillate in time,
but instead becomes more and distinct.

Now describe the degree to which the residual
error consists of independent increments.
Let $\delta=0.390625$ in (\ref{taudef}) to obtain 
$J=256\,000$ time increments for every $u_0\in{\cal U}$ 
of the form $z_j =\vb Z_j,\phi\ve$
where
$$
	Z_j=R(\tau_j;u_0)-R(\tau_{j-1};u_0)
	\qquad\hbox{and}\qquad\tau_j=j\delta.
$$
Here $\phi$ is a unit vector chosen as in (\ref{zetav}).
The resulting autocorrelations at the point $(\phi,\phi)$ may
be computed using the ratios
$\gamma(\Delta t)/\gamma(0)$ with $\Delta t=\delta\ell$ and
$$
	\gamma(\Delta t)=
		\bigg\langle{1\over J-\ell}\sum_{j=1}^{J-\ell}
			\Big(z_j z_{j+\ell}-\bar z_0\bar z_\ell\Big)\bigg\rangle
\qquad\hbox{where}\qquad
	\bar z_k = \bigg\langle{1\over J-\ell}\sum_{j=1}^{J-\ell} z_{j+k}
\bigg\rangle.
$$
Computational results for the case $\phi=\phi_2$ are depicted in 
Figure \ref{fcor}.  We take $\Delta t\in [0,2\tau]$ where $\tau$ 
is the time for one large-eddy turnover.
Note that $2\tau$ is small compared to total length of each 
time-series, which is greater than $1000\tau$.
When $\Delta t$ is small but positive, the autocorrelation is 
near unity, as expected from (\ref{iscor}).
As $\Delta t$ increases the autocorrelation decreases.
Table \ref{cortable} characterizes the apparent support 
of the autocorrelation by computing the length of the 
smallest interval such that $|\sigma(\Delta t)/\sigma(0)|<0.05$
for all values of $\Delta t$ outside that interval.
When $\alpha_0=0.04$ or $\alpha_0=0.2$ the size of this
interval is about $\tau/4$.
When $\alpha_0=0.01$
the size of the interval is noticeably larger.
This is somewhat
surprising given the slow growth of 
${\cal E}_{\rm rms}^n$ in this case.
Note that using a cutoff of $0.05$ to describe the interval of 
support was somewhat arbitrary---other cutoffs reveal a 
similar relationship between the parameters $\alpha_0$, $d$, $\phi$ 
and the size of the interval.

\begin{figure}[h!]
	\centerline{\begin{minipage}[b]{0.75\textwidth}
    \caption{\label{fcor}%
	Autocorrelation for $\gamma(\Delta t)/\gamma(0)$ 
	when $\phi=\phi_2$.
	The graphs have been offset along the vertical axis
	for clarity.  The vertical line at $\tau$ represents
	the time for one large-eddy turnover.
	}
	\end{minipage}}
	\centerline{\includegraphics[height=0.45\textwidth]{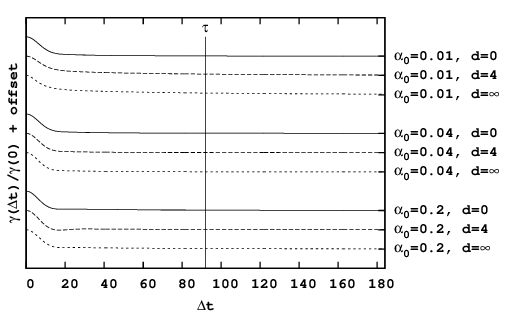}}
\end{figure}

Next, characterize the spatial correlations
in residual error.
Working with the residual vorticity $\rho$
is simpler than $R$ because it is a scalar.
Since the norm on $V$ satisfies
$$
	\big\|R(t)\big\|^2=\int_\Omega \big|\rho(t,x)\big|^2 dx,
$$
then working with $\rho$ is also, in some sense, natural.
For completeness, we also consider spatial correlations in 
residual velocity field $R$ as well.

To begin, let $\delta=6.25$ to obtain $J=16\,000$ 
increments in the residual vorticity
given by $\zeta_j=\rho(\tau_j)-\rho(\tau_{j-1})$ where
$\tau_j=j\delta$.
Figure \ref{spacov} illustrates the correlation 
$\sigma(\Delta x)/\sigma(0)$ where
\begin{equation}\label{spcor}
	\sigma(\Delta x)= \bigg\langle{1\over J}
		\sum_{j=1}^{J}\Big(
		\zeta_j(\pi,\pi) \zeta_j(\pi+\Delta x,\pi)
		-	\overline \zeta(\pi,\pi)\overline \zeta(\pi+\Delta x,\pi)\Big)
	\bigg\rangle
\end{equation}
and
$$
	\overline\zeta(x,y)=
		\bigg\langle
		{1\over J}\sum_{j=1}^{J} \zeta_j(x,y)\bigg\rangle.
$$
These curves may be seen as particular slices of the covariance 
matrix of the residual error in the vorticity at the point 
$(\pi,\pi)$ corresponding to the $x$-axis.
It is interesting that when $\alpha_0=0.01$ and $\alpha_0=0.04$
there is a distance $\Delta x$ for which the correlation is negative.
Nearly identical graphs are obtained when slicing the
covariance matrix at different points in different directions.
In particular, although the presence of a non-zero body force
has the potential to render the statistics of the resulting 
flow neither homogeneous or isotropic,
our results show that the spatial correlations of the residual 
error are nearly homogeneous and isotropic.  
This is consistent with the assumptions of homogeneity and 
isotropy used in the derivations of the turbulence models.

\begin{table}[h!]
	\centerline{\begin{minipage}[b]{0.75\textwidth}
    \caption{\label{cortable}%
	Lengths of the smallest intervals in time outside of
	which the autocorrelation $|\gamma(\Delta t)/\gamma(0)|< 0.05$ 
	for $\phi\in\{\phi_1,\phi_2,\phi_3\}$ and
	the lengths of the smallest intervals in space outside 
	of which the spatial correlations
	$|\sigma(\Delta x)/\sigma(0)|< 0.05$ and
	$|\Tr \sigma_u(\Delta x)/\Tr \sigma_u(0)|< 0.05$.
	}
	\end{minipage}}
	\medskip
    \centering
    \begin{tabular}{ l c | r r r | c c }
    \hline\hline
\vphantom{\raise8pt\hbox{H}}%
    $\phantom{.}\alpha_0$ & $d$ & 
		$\phi_1$ & $\phi_2\phantom{2}$ & $\phi_3\phantom{3}$
		& $\sigma$ & $\Tr \sigma_u$ \\ [0.5ex]
    \hline
\vphantom{\raise8pt\hbox{H}}%
0.01&   0& 17.97& 34.38& 87.50&  0.49&  0.74 \\ 
0.01&   4& 19.92&119.92&150.39&  0.39&  0.39 \\ 
0.01& $\infty$& 22.27&141.41& 86.72&  0.39&  0.39 \\ 
0.04&   0& 19.14& 28.52& 54.69&  0.64&  0.76 \\ 
0.04&   4& 17.19& 22.27& 38.28&  0.74&  0.61 \\ 
0.04& $\infty$& 14.84& 25.78& 39.45&  0.66&  0.47 \\ 
0.2&   0& 23.83& 46.09& 43.75&  1.08&  1.94 \\ 
0.2&   4& 13.67& 12.89& 14.45&  1.01&  2.38 \\ 
0.2& $\infty$& 28.52& 53.52& 40.23&  1.64&  2.38 \\ 
[1ex]
\hline
\end{tabular}
\end{table}

Note that the apparent support of the spatial correlations 
increases
as $\alpha_0$ increases but is less affected by $d$.
Table \ref{cortable} characterizes the support of the
spatial correlation in a way analogous to the method
used for the autocorrelation.
For comparison, the support of the spatial 
correlations in the flow itself, obtained by taking 
$\zeta_j=\omega(\tau_j)-\omega(\tau_{j-1})$ in (\ref{spcor}),
may be characterized by an interval of length $1.74$ outside
of which the correlation is less than~$0.05$.
We conclude the vorticity field of the original flow is spatially 
correlated over distances which are three to six times longer than
the spatial correlations observed in the residual vorticity
when $\alpha_0=0.01$ and $\alpha_0=0.04$.
When $\alpha_0=0.2$ the distances are comparable.

This section finishes by characterizing the spatial correlations of
the residual error in the velocity. 
To avoid the $2$-by-$2$ matrices which arise from
the horizontal and vertical components of $R$,
take the trace to obtain
$$
	\Tr \sigma_u(\Delta x)= \bigg\langle{1\over J}
		\sum_{j=1}^{J}\Big(
		R_j(\pi,\pi) \cdot R_j(\pi+\Delta x,\pi)
		-	\overline R(\pi,\pi)\cdot \overline R(\pi+\Delta x,\pi)\Big)
	\bigg\rangle.
$$
The graphs of $\Tr\sigma_u(\Delta x)/\Tr\sigma_u(0)$ look 
similar to those in Figure \ref{spacov}
except with less compact support.
This fact is quantified by the last column of Table \ref{cortable}.
As with the vorticity, the correlation distances in the 
velocity field of the physical flow 
are three to six times longer than the correlation distances
in the model error when $\alpha_0=0.01$ and $\alpha_0=0.04$.

\begin{figure}[h!]
	\centerline{\begin{minipage}[b]{0.75\textwidth}
    \caption{\label{spacov}%
	Spatial correlation $\sigma(\Delta x)/\sigma(0)$
	for different values of $\alpha_0$ and $d$.  
	The graphs have been offset along the vertical axis
	for clarity.
}
	\end{minipage}}
	\centerline{\includegraphics[height=0.45\textwidth]{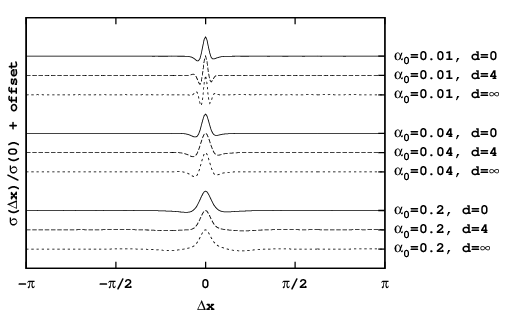}}
\end{figure}

\section{Conclusions and Future Work}

A computational method was developed for studying the model error 
by calculating the time evolution of the root-mean-squared residual 
error taken over an ensemble of trajectories on the global attractor.
We emphasize that the method of computing the model error
employed in this paper
avoids the difficulty that there is no shadowing result which can 
be used to compare separate evolutions of approximate dynamics 
to exact dynamics over long periods of time.
Taking the discrete dynamics given by $S$ to be exact
allows us to focus on the effects of the alpha modeling apart 
from issues relating to the numerical discretization of the 
continuous dynamics.

During our study we identified an
effective averaging length scale $\alpha_0=\alpha/\sqrt{d+1}$ 
in the LANS-alpha and NS-alpha 
deconvolution models of turbulence 
and created 
a new turbulence model, the exponential-alpha model, corresponding to 
the limit as $d\to\infty$.  
This identification 
of $\alpha_0$ allows direct comparison of the residual error
while holding the behavior in the microscales essentially constant.
Numerical computation showed
for a particular time-independent forcing function 
with Grashof number $G=250\,000$ that

\begin{itemize}
\item if $\alpha_0=0.01$ or if $\alpha_0=0.04$ and
$d\in \{4,\infty\}$,
then ${\cal E}_{\rm rms}^n$ grows as $\sqrt{t_n}$;

\item if $\alpha_0=0.20$ or if $\alpha_0=0.04$ and
$d=0$, then ${\cal E}_{\rm rms}^n$ grows linearly as $t_n$.
\end{itemize}
Note that the NS-alpha deconvolution 
model of order $d=4$ performs similarly to the exponential-alpha model
in these experiments.
Little difference is expected, therefore, between NS-alpha 
deconvolution models with $d\ge 4$.

Although the residual errors in the deterministic alpha models 
are differentiable and do not have independent
increments, when $\alpha_0$ is sufficiently small they produce 
time series which are similar to a spatially-correlated and
temporally-white Gaussian process.
At the same time, when $\alpha_0$ is too large, the model error
includes a systematic bias.
Further analysis indicates that this systematic bias is 
concentrated in a direction spanned by the forcing function.
Moreover, when $d=0$ the bias is opposite in sign compared to
when $d=4$.
This suggests for large values of $\alpha_0$ that there may 
exist a fractional value of $d\in (0,4)$ which removes the
bias and for which ${\cal E}_{\rm rms}^n$ grows as $\sqrt{t_n}$.

It would be interesting to further study the analytic properties 
of the new exponential alpha model proposed in this paper, to use the
effective averaging length scale $\alpha_0$ to compare subgrid
scale models and boundary layers for flows with more complicated 
boundary conditions,
to study how the body forcing affects the homogeneity and isotropy
of the statistics of a turbulent flow and to use the 
techniques developed in this paper to create computationally efficient 
turbulence models with model errors that grow only as $\sqrt t$ over time.

\section*{Acknowledgements}

The author was supported in part by NSF grant DMS-1418928.  The author
would like to thank the anonymous referees for many comments 
which improved this paper.  One idea particularly worth mentioning 
was to increase the size of the ensemble averages and perform 
the statistical analysis given in Section \ref{furan}.

\vfill\eject

\end{document}